\newtheorem{theorem}{Theorem}
\newtheorem{corollary}{Corollary}
\theoremstyle{remark}
\theoremstyle{definition}
\newcommand{\E}{\mathbb{E}}
\newcommand{\Prob}{\mathbb{P}}
\DeclareMathOperator*{\Cov}{Cov}
\numberwithin{equation}{section}
\theoremstyle{plain}
\begin{document}

\begin{frontmatter}
\title{Mixing conditions of conjugate processes}
\runtitle{Mixing conditions of conjugate processes}

\begin{aug}
\author{\fnms{Eduardo} \snm{Horta}\thanksref{a}\ead[label=e1]{eduardo.horta@ufrgs.br}}
\and
\author{\fnms{Flavio} \snm{Ziegelmann}\thanksref{a}\ead[label=e2]{flavioaz@mat.ufrgs.br}}


\runauthor{Horta and Ziegelmann}

\affiliation[a]{Instituto de Matemática e Estatística -- Universidade Federal do Rio Grande do Sul}

\address{Address of the First and Second authors,\\
\printead{e1,e2}}

\end{aug}

\begin{abstract}
We give sufficient conditions ensuring that a $\psi$--mixing property holds for the sequence of empirical \textsc{cdf}s associated to a conjugate process.
\end{abstract}

\begin{keyword}[class=MSC]
\kwd{60G57}
\kwd{60G10}
\kwd{62G99}
\kwd{62M99}
\end{keyword}

\begin{keyword}
\kwd{conjugate processes}
\kwd{mixing conditions}
\kwd{random measure}
\kwd{covariance operator}
\kwd{functional time series}
\end{keyword}


\end{frontmatter}

\section{Introduction}
In Horta~and~Ziegelmann~\citeyearpar{horta2015conjugate} a \emph{conjugate process} is defined to be a pair $(\xi,\,X)$, where $X \coloneqq (X_\tau:\,\tau\geq 0)$ is a real valued, continuous time stochastic process, and $\xi \coloneqq (\xi_t:\, t=0,1,\dots)$ is a strictly stationary sequence of $M_1(\mathbb{R})$-valued\footnote{Here $M_1\left(\mathbb{R}\right)$ denotes the set of Borel probability measures on $\mathbb{R}$.} random elements, for which the following condition holds:
\begin{equation}\label{eq:conjugation}
	\Prob\left(X_\tau \in B\,\vert\,\xi_0,\xi_1,\dots\right) = \xi_t\left(B\right),\qquad \tau\in \left[t,t+1\right),
\end{equation}
for each $t = 0, 1, \dots$ and each Borel set $B$ in the real line. From the statistical viewpoint, the sequence $\xi$ is to be understood as a latent (i.e. unobservable) process, and thus all inference must be carried using information attainable from the continuous time, observable process $X$ alone. A crucial objective in this context is estimation of the operator $R^\mu\colon L^2\left(\mu\right)\rightarrow L^2\left(\mu\right)$ defined by
\begin{equation}
	R^\mu f\left(x\right) \coloneqq \int R_\mu\left(x, y\right) f\left(y\right)\,\mu\left(\mathrm{d}y\right),\qquad x\in\mathbb{R}
\end{equation}
where the kernel $R_\mu$ is given by
\begin{equation*}
	R_\mu\left(x,y\right) \coloneqq \int \Cov\left(F_0\left(x\right),F_1\left(z\right)\right)\Cov\left(F_0\left(y\right),F_1\left(z\right)\right)\,\mu\left(\mathrm{d}z\right),\qquad x,y\in\mathbb{R},
\end{equation*}
and where $\mu$ is a fixed, arbitrary probability measure on $\mathbb{R}$ equivalent to Lebesgue measure. In the above, $F_t\left(x\right) \coloneqq \xi_t\left(-\infty,x\right]$, $x\in\mathbb{R}$, is the (random) \textsc{cdf} corresponding to $\xi_t$.

One of the key results in Horta~and~Ziegelmann~\citeyearpar{horta2015conjugate} is Theorem~\ref{thm:LLN-for-psihat} below, which provides sufficient conditions under which $R^\mu$ can be $\sqrt{n}$-consistently estimated. Before stating the theorem, we shall shortly introduce the estimator $\widehat{R}^\mu$ which is (as one should expect) a sample analogue of $R^\mu$. Consider, for each $t = 1,\dots, n$, a sample of observations $\left\{X_{i,t}:\,i=1,\dots,q_t\right\}$ of size $q_t$ from $\left(X_\tau:\,\tau\in\left[t,t+1\right)\right)$. Typically one has $X_{i,t} = X_{t + (i-1)/q_t}$. We then let $\widehat{F}_t$ denote the empirical \textsc{cdf} associated with the sample $X_{1,t},\dots,X_{q_t,t}$,
\begin{equation*}
	\widehat{F}_t(x) \coloneqq \frac{1}{q_t}\sum_{i=1}^{q_t}\mathbb{I}[X_{i,t}\leq x],\qquad x\in \mathbb{R}.
\end{equation*}
Notice that both $F_t$ and $\widehat{F}_t$ are random elements with values in the Hilbert space $L^2(\mu)$, and thus we find ourselves in a framework similar to Horta~and Ziegelmann~\citeyearpar{horta2016identifying}.

In this setting, $\widehat{R}^\mu$ is defined to be the operator acting on $L^2\left(\mu\right)$ with kernel
\begin{equation*}
	\widehat{R}_\mu\left(x,y\right) \coloneqq \int \widehat{C}_1\left(x,z\right)\widehat{C}_1\left(y,z\right)\mu\left(\mathrm{d}z\right),\qquad x,y\in\mathbb{R},
\end{equation*}
where $\widehat{C}_1$ is the sample lag-1 covariance function
\begin{equation*}
	\widehat{C}_1\left(x,y\right) \coloneqq \frac{1}{n-1}\sum_{t=1}^{n-1}\left(\widehat{F}_t\left(x\right) - \bar{F}_0\left(x\right)\right)\times\left(\widehat{F}_{t+1}\left(y\right) - \bar{F}_0\left(y\right)\right),\qquad x,y\in\mathbb{R},
\end{equation*}
with $\bar{F}_0 \coloneqq \left(1/n\right)\sum_{t=1}^{n}\widehat{F}_t$.

Last but not least, let $X^{(t)}$ denote the stochastic process $\left(X_{t+\tau}:\,\tau\in\left[0,1\right)\right)$, so that $X^{(0)}, X^{(1)},\dots,X^{(t)},\dots$ is a sequence of $\mathbb{R}^{[0,1)}$-valued random elements. We say that a conjugate process $\left(\xi,\,X\right)$ is \emph{cyclic independent} if, conditional on $\xi$, we have that $\left(X^{(t)}:\,t=0,1,\dots\right)$ is an independent sequence. This means that, for each $n$ and each $(n+1)$-tuple $\mathcal{C}_0,\dots,\mathcal{C}_{n}$ of measurable subsets of $\mathbb{R}^{[0,1)}$, it holds that
\begin{equation}\label{eq:cyclic-independence}
	\Prob(X^{(0)}\in\mathcal{C}_0,\dots,X^{(n)}\in\mathcal{C}_n\,|\,\xi) = \prod_{t=0}^{n}\Prob(X^{(t)}\in\mathcal{C}_t\,|\,\xi).
\end{equation}

We are now ready to state the consistency theorem.

\begin{theorem}[Horta and Ziegelmann~\citeyearpar{horta2015conjugate}]\label{thm:LLN-for-psihat}
Let $\left(\xi,X\right)$ be a cyclic--independent conjugate process, and let $\mu$ be a probability measure on $\mathbb{R}$ equivalent to Lebesgue measure. Assume that $\big(\widehat{F}_t\colon\,t=1,2,\dots\big)$ is a $\psi$--mixing sequence, with the mixing coefficients $\Psi\left(k\right)$ satisfying $\sum_{k=1}^\infty k\,\Psi^{1/2}\left(k\right) < \infty$. Then it holds that
\begin{enumerate}[label={(\textit{\roman*}})]
	\item $\Vert \widehat{R}^\mu - R^\mu\Vert_{HS} = O_\Prob\left(n^{-1/2}\right)$;
	\item $\sup_{j\in \mathbb{N}}\vert \widehat{\theta}_j - \theta_j\vert = O_\Prob\left(n^{-1/2}\right)$.
\end{enumerate}
If moreover the nonzero eigenvalues of $R^\mu$ are all distinct, then
\begin{enumerate}[label={(\textit{\roman*}}), start = 3]
	\item $\Vert\widehat{\psi}_j - \psi_j\Vert_{L^2(\mu)} = O_\Prob\big(n^{-1/2}\big)$, for each $j$ such that $\theta_j > 0$.
\end{enumerate}
\end{theorem}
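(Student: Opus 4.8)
The plan is to reduce all three statements to a single estimate on the sample lag-$1$ covariance operator. Write $C_1$ for the integral operator on $L^2(\mu)$ with kernel $C_1(x,y) \coloneqq \Cov(F_0(x),F_1(y))$, and $\widehat{C}_1$ for the operator with the sample kernel $\widehat{C}_1(x,y)$. A direct computation with the adjoint shows that the kernel of $C_1 C_1^*$ is precisely $R_\mu$ and that of $\widehat{C}_1\widehat{C}_1^*$ is precisely $\widehat{R}_\mu$; hence $R^\mu = C_1 C_1^*$ and $\widehat{R}^\mu = \widehat{C}_1\widehat{C}_1^*$. Both operators are self-adjoint, positive and Hilbert--Schmidt, since their kernels are bounded (the $F_t,\widehat{F}_t$ take values in $[0,1]$), so $R^\mu$ has discrete spectrum $\theta_1\geq\theta_2\geq\cdots\geq 0$ with orthonormal eigenfunctions $\psi_j$.

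First I would establish the key estimate $\norm{\widehat{C}_1 - C_1}_{HS} = O_\Prob(n^{-1/2})$, for which, by Markov's inequality, it suffices to prove
\[
	\E\norm{\widehat{C}_1 - C_1}_{HS}^2 = \int\!\!\int \E\abs{\widehat{C}_1(x,y) - C_1(x,y)}^2\,\mu(\mathrm{d}x)\,\mu(\mathrm{d}y) = O(n^{-1}).
\]
I would first replace the random centering $\bar{F}_0$ by $\mu_F \coloneqq \E F_0$, absorbing the difference into lower-order terms (the sample mean converges to $\mu_F$ at rate $n^{-1/2}$). The remaining main term is $(n-1)^{-1}\sum_t (\widehat{F}_t(x)-\mu_F(x))(\widehat{F}_{t+1}(y)-\mu_F(y))$. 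Here cyclic independence is essential: conditionally on $\xi$ the blocks $\widehat{F}_t$ and $\widehat{F}_{t+1}$ are independent, so $\E[\widehat{F}_t(x)\widehat{F}_{t+1}(y)\mid\xi] = F_t(x)F_{t+1}(y)$ and the within-block sampling noise $\widehat{F}_t - F_t$ contributes nothing to the expectation; by stationarity the main term is unbiased for $C_1(x,y)$.

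The main obstacle is the variance bound for this bilinear statistic. Setting $Z_t(x,y) \coloneqq (\widehat{F}_t(x)-\mu_F(x))(\widehat{F}_{t+1}(y)-\mu_F(y))$, its variance equals $(n-1)^{-2}\sum_{t,s}\Cov(Z_t(x,y),Z_s(x,y))$, a sum of fourth-order covariances of the empirical \textsc{cdf}s. I would control each term through the $\psi$-mixing coefficient of the gap between the index windows $\{t,t+1\}$ and $\{s,s+1\}$: since the $Z_t$ are uniformly bounded, the $\psi$-mixing covariance inequality, combined with a Cauchy--Schwarz split that introduces the exponent $1/2$, yields a bound in terms of $\Psi^{1/2}(\abs{t-s}-1)$. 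Summing over $s$ and then over time, the hypothesis $\sum_k k\,\Psi^{1/2}(k) < \infty$ is exactly what makes the double sum $O(n)$, hence the variance $O(n^{-1})$, uniformly in $(x,y)$; integrating against the probability measure $\mu\times\mu$ preserves the rate. This summation, together with the bookkeeping that separates the signal $F_t$ from the noise $\widehat{F}_t - F_t$, is where the real work lies.

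Granting the key estimate, part (\textit{i}) follows from the identity $\widehat{R}^\mu - R^\mu = (\widehat{C}_1 - C_1)\widehat{C}_1^* + C_1(\widehat{C}_1 - C_1)^*$, the submultiplicativity $\norm{AB}_{HS}\leq\norm{A}_{HS}\norm{B}_{HS}$, and the facts that $\norm{C_1}_{HS}$ is a fixed constant and $\norm{\widehat{C}_1}_{HS}=O_\Prob(1)$. For part (\textit{ii}), self-adjointness of $\widehat{R}^\mu$ and $R^\mu$ allows Weyl's perturbation inequality, giving $\sup_j\abs{\widehat{\theta}_j - \theta_j}\leq\norm{\widehat{R}^\mu - R^\mu}_{op}\leq\norm{\widehat{R}^\mu - R^\mu}_{HS}=O_\Prob(n^{-1/2})$. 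Finally, for part (\textit{iii}), distinctness of the nonzero eigenvalues makes each such $\theta_j$ simple with positive spectral gap $\delta_j\coloneqq\min_{k\neq j}\abs{\theta_j - \theta_k}$, and the standard eigenprojection perturbation bound for compact self-adjoint operators gives $\norm{\widehat{\psi}_j - \psi_j}_{L^2(\mu)}\leq 2\sqrt{2}\,\delta_j^{-1}\norm{\widehat{R}^\mu - R^\mu}_{op}$ for a suitable sign choice, yielding the $n^{-1/2}$ rate for each fixed $j$ with $\theta_j>0$.
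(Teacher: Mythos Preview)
The paper does not contain a proof of this theorem. Theorem~\ref{thm:LLN-for-psihat} is stated with the attribution ``Horta and Ziegelmann~(2018)'' and is quoted from that reference as background for the paper's actual contribution (Theorem~\ref{thm:mixing-inheritance} and its corollary). There is therefore no ``paper's own proof'' against which to compare your attempt.

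That said, your sketch is a reasonable outline of how such a result is typically established in the Hilbertian time-series literature: reduce to a Hilbert--Schmidt rate for the sample lag-$1$ covariance operator, then transfer to $R^\mu=C_1C_1^\ast$ by an algebraic identity, and finally invoke standard perturbation results (Weyl for eigenvalues, Davis--Kahan type bounds for eigenfunctions). Two small points to watch if you develop it fully. First, your remark that a ``Cauchy--Schwarz split introduces the exponent $1/2$'' is vague; for bounded random variables the basic $\psi$-mixing covariance inequality already gives a bound linear in $\Psi(k)$, so the appearance of $\Psi^{1/2}$ in the hypothesis likely reflects the specific inequality used in the cited paper (e.g.\ for products of $L^2$-bounded Hilbert-valued terms) rather than something forced by your outline. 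Second, the claim that the within-block sampling noise $\widehat{F}_t-F_t$ ``contributes nothing to the expectation'' is correct for the bias of the main term but not for its variance; the variance of $Z_t$ and the cross-covariances still involve this noise, and you must check that cyclic independence plus boundedness keep those contributions uniformly controlled. Neither issue is fatal, but both would need to be written out carefully.
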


In the above, $\Vert\cdot\Vert_{HS}$ denotes the Hilbert-Schmidt norm of an (suitable) operator acting on $L^2(\mu)$, $(\theta_j\colon\,j\in\mathbb{N})$ (resp. $(\widehat{\theta}_j\colon\,j\in\mathbb{N})$) denotes the non-increasing sequence of eigenvalues of $R^\mu$ (resp. $\widehat{R}^\mu$), with repetitions if any\footnote{Notice that there is some ambiguity in defining things in this manner; to ensure that everything is well defined, we adopt the convention that the sequence $(\theta_j)$ contains zeros if and only if $R^\mu$ is of finite rank. Thus if the range of $R^\mu$ is infinite dimensional and $0$ is one of its eigenvalues, it will not show up in the sequence $(\theta_j)$. On the other hand, $\widehat{R}^\mu$ is always of finite rank.}, and, for $j\in\mathbb{N}$, $\psi_j$ (resp. $\widehat{\psi}_j$) denotes the unique eigenfunction associated with $\theta_j$ (resp. $\widehat{\theta}_j$).

\section{Main result}
In what follows it will be convenient to assume that the latent process is indexed for $t\in\mathbb{Z}$ and that the continuous time, observable process $X$ is indexed for $\tau\in\mathbb{R}$. That is, we update our definitions so that $\xi \coloneqq (\xi_t:\,t\in\mathbb{Z})$ and $X \coloneqq (X_\tau:\,\tau\in\mathbb{R})$. Recall (see Bradley~\citeyearpar{bradley2005basic}) that a strictly stationary sequence $(Z_t: t\in \mathbb{Z})$ of random elements taking values in a measurable space $\mathcal Z$ is said to be \emph{$\psi$-mixing} if the $\psi$-mixing coefficient $\Psi_Z$ defined, for $k\in\mathbb N$, by
\begin{equation}\label{eq:psi-mixing-coef}
	\Psi_Z(k) \coloneqq \sup \left|1 - \frac{\Prob(A\cap B)}{\Prob(A)\Prob(B)}\right|
\end{equation}
is such that $\Psi_Z(k)\to 0$ as $k\to\infty$, where the supremum in \eqref{eq:psi-mixing-coef} ranges over all $A \in \sigma(Z_t\colon\,t\leq 0)$ and all $B\in \sigma(Z_t\colon\,t\geq k)$ for which $\Prob(A)\Prob(B)>0$.

The $\psi$--mixing condition in Theorem~\ref{thm:LLN-for-psihat} imposes restrictions on the sequence of empirical \textsc{cdf}s $\big(\widehat{F}_t\big)$ and thus constrains $(F_t)$ and $(X_\tau)$ jointly. One could argue that it is more natural to impose a $\psi$--mixing condition on the latent process $(\xi_t)$ instead, the issue being that it may be the case that a mixing property  of the latter sequence is not inherited by $(\widehat{F}_t)$. 
If a condition slightly stronger than cyclic--independence is imposed, however, then inheritance does hold. This is our main result.

\begin{theorem}\label{thm:mixing-inheritance}
{Let $\left(\xi,\,X\right)$ be a cyclic--independent conjugate process, and let $\mu$ be a probability measure on $\mathbb{R}$ equivalent to Lebesgue measure. Assume $\xi$ is $\psi$--mixing with mixing coefficient sequence $\Psi_\xi$. If, for each $t$, the conditional distribution of $X^{\left(t\right)}$ given $\xi$ depends only on $\xi_t$, in the sense that the equality
\begin{equation}\label{eq:strong-cyclic-independence}
	\Prob\big[X^{\left(t\right)}\in\mathcal{C}\,\big\vert\,\xi\big] = \Prob\big[X^{\left(t\right)}\in\mathcal{C}\,\big\vert\,\xi_t\big]
\end{equation}
holds for each measurable subset $\mathcal{C}$ of $\mathbb{R}^{\left[0,1\right)}$ and each $t$, then $\big(X^{\left(t\right)}\big)$ is $\psi$--mixing with mixing coefficient sequence $\Psi_{X}\leq \Psi_\xi$.}
\end{theorem}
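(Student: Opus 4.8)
The plan is to estimate directly the ratio appearing in the definition of $\Psi_X$ in \eqref{eq:psi-mixing-coef}, transferring the problem from the observable sequence $\big(X^{(t)}\big)$ to the latent sequence $(\xi_t)$ by conditioning on $\xi$. Fix $k\geq 1$ and take arbitrary $A\in\sigma\big(X^{(t)}\colon t\leq 0\big)$ and $B\in\sigma\big(X^{(t)}\colon t\geq k\big)$ with $\Prob(A)\Prob(B)>0$. Writing $g_A\coloneqq\Prob(A\mid\xi)$ and $g_B\coloneqq\Prob(B\mid\xi)$, the first key step is to show that $g_A$ admits a version measurable with respect to $\sigma(\xi_t\colon t\leq 0)$ and $g_B$ a version measurable with respect to $\sigma(\xi_t\colon t\geq k)$.

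For this I would first treat finite--dimensional cylinder events. If $A=\big\{X^{(t_1)}\in\mathcal{C}_1,\dots,X^{(t_m)}\in\mathcal{C}_m\big\}$ with $t_1,\dots,t_m\leq 0$, then cyclic--independence \eqref{eq:cyclic-independence} gives $g_A=\prod_{i=1}^m\Prob\big(X^{(t_i)}\in\mathcal{C}_i\mid\xi\big)$, and the strengthened hypothesis \eqref{eq:strong-cyclic-independence} turns each factor into $\Prob\big(X^{(t_i)}\in\mathcal{C}_i\mid\xi_{t_i}\big)$, a $\sigma(\xi_{t_i})$--measurable, hence $\sigma(\xi_t\colon t\leq 0)$--measurable, random variable. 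Thus $g_A$ is $\sigma(\xi_t\colon t\leq 0)$--measurable for such $A$. Since the cylinder events form a $\pi$--system generating $\sigma\big(X^{(t)}\colon t\leq 0\big)$, and since the family of events $A$ for which $g_A$ has a $\sigma(\xi_t\colon t\leq 0)$--measurable version is a $\lambda$--system, Dynkin's theorem extends the conclusion to all $A\in\sigma\big(X^{(t)}\colon t\leq 0\big)$; the argument for $g_B$ is identical.

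Next I would factor the joint probability. Because $A$ and $B$ involve the blocks $\big(X^{(t)}\colon t\leq 0\big)$ and $\big(X^{(t)}\colon t\geq k\big)$, which are disjoint in the time index, conditional independence \eqref{eq:cyclic-independence} yields $\Prob(A\cap B\mid\xi)=g_A\,g_B$, whence $\Prob(A\cap B)=\E[g_A g_B]$, while $\Prob(A)=\E[g_A]$ and $\Prob(B)=\E[g_B]$. It then remains to compare $\E[g_A g_B]$ with $\E[g_A]\E[g_B]$. Here I would invoke the elementary consequence of $\psi$--mixing that, for nonnegative $f$ measurable with respect to $\sigma(\xi_t\colon t\leq 0)$ and nonnegative $h$ measurable with respect to $\sigma(\xi_t\colon t\geq k)$, one has $\abs{\E[fh]-\E[f]\E[h]}\leq\Psi_\xi(k)\,\E[f]\E[h]$; this follows by writing the defining inequality $\abs{\Prob(A'\cap B')-\Prob(A')\Prob(B')}\leq\Psi_\xi(k)\Prob(A')\Prob(B')$ for events, extending to nonnegative simple functions by bilinearity, and then to general nonnegative $f,h$ by monotone convergence. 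Applying this with $f=g_A$ and $h=g_B$ (both valued in $[0,1]$) gives $\abs{\Prob(A\cap B)-\Prob(A)\Prob(B)}\leq\Psi_\xi(k)\Prob(A)\Prob(B)$, and dividing by $\Prob(A)\Prob(B)$ and taking the supremum over admissible $A,B$ yields $\Psi_X(k)\leq\Psi_\xi(k)$, as desired.

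The step I expect to be most delicate is the measurability reduction of $g_A$ to the past $\sigma$--algebra of $\xi$: this is precisely where the strengthened hypothesis \eqref{eq:strong-cyclic-independence} is indispensable, since under bare cyclic--independence each factor $\Prob\big(X^{(t_i)}\in\mathcal{C}_i\mid\xi\big)$ could depend on the whole latent sequence and the transfer to $(\xi_t)$ would break down. Some care is also needed to carry the $\lambda$--system argument through at the level of versions of conditional expectations, with equalities understood almost surely, but this is routine.
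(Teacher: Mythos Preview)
Your proof is correct and shares the paper's core strategy: condition on $\xi$, use cyclic independence together with hypothesis \eqref{eq:strong-cyclic-independence} to reduce the relevant conditional probabilities to random variables measurable with respect to the past and future $\sigma$-fields of $\xi$, and then invoke the $\psi$-mixing of $\xi$. The difference lies in how the passage from cylinder events to the full tail $\sigma$-fields is handled. The paper stays with finite cylinder events $\bigcap_{t\in T_j}[X^{(t)}\in\mathcal{C}_t]$, rewrites the ratio in \eqref{eq:psi-mixing-coef} as one involving expectations of products $\prod g_t\circ\xi_t$, and bounds it by the supremum of such ratios over all bounded positive $g_t'$, which it identifies with $\Psi_\xi(k)$. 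You instead first extend to arbitrary $A\in\sigma(X^{(t)}\colon t\leq 0)$ via a $\pi$--$\lambda$ argument on the measurability of $g_A=\Prob(A\mid\xi)$, and then apply the functional covariance inequality $\abs{\E[fh]-\E[f]\E[h]}\leq\Psi_\xi(k)\,\E[f]\,\E[h]$ for nonnegative past-- and future--measurable $f,h$. Your route is somewhat longer but more explicit: the paper's identification of its supremum with $\Psi_\xi(k)$, and its implicit assumption that cylinder events suffice to compute $\Psi_X(k)$, both ultimately rest on the same extension steps you spell out in detail.
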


\begin{corollary}\label{thm:mixing-inheritance-corol}
	In the conditions of Theorem~\ref{thm:mixing-inheritance}, if $\sum_{k=1}^\infty k \Psi_\xi(k)^{1/2} <\infty$, then the $\psi$--mixing assumption of Theorem~\ref{thm:LLN-for-psihat} holds.
\end{corollary}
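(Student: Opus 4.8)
The plan is to reduce everything to Theorem~\ref{thm:mixing-inheritance}, which already supplies the hard analytic content, namely that under the stated hypotheses the segment process $\big(X^{(t)}\big)$ is $\psi$--mixing with $\Psi_X\leq\Psi_\xi$. The only additional observation needed is that each empirical \textsc{cdf} $\widehat{F}_t$ is a deterministic (measurable) function of the single segment $X^{(t)}$, so that the $\psi$--mixing property, being monotone under coordinatewise measurable reductions, passes from $\big(X^{(t)}\big)$ down to $\big(\widehat{F}_t\big)$; the summability then transfers by direct comparison of coefficients.

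In detail, I would first record that, since $\widehat{F}_t$ is built from the finitely many observations $X_{t+(i-1)/q_t}$, $i=1,\dots,q_t$, each of which is a coordinate of the segment $X^{(t)}=\left(X_{t+\tau}\colon\tau\in[0,1)\right)$, there is a Borel-measurable map $g_t$ with $\widehat{F}_t = g_t\big(X^{(t)}\big)$; measurability of $x\mapsto\widehat{F}_t$ as an $L^2(\mu)$-valued random element is routine, the empirical \textsc{cdf} being a step function determined by the order statistics of the sample. Consequently the tail $\sigma$--algebras satisfy $\sigma\big(\widehat{F}_t\colon t\leq 0\big)\subseteq\sigma\big(X^{(t)}\colon t\leq 0\big)$ and $\sigma\big(\widehat{F}_t\colon t\geq k\big)\subseteq\sigma\big(X^{(t)}\colon t\geq k\big)$ for every $k$.

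The decisive step is then purely formal: the $\psi$--mixing coefficient in \eqref{eq:psi-mixing-coef} is a supremum of $\big|1-\Prob(A\cap B)/(\Prob(A)\Prob(B))\big|$ over pairs of events $A,B$ drawn from the past and future tail $\sigma$--algebras, and shrinking those $\sigma$--algebras can only shrink the collection of admissible pairs, hence the supremum. Thus $\Psi_{\widehat{F}}(k)\leq\Psi_X(k)$ for all $k$, and chaining with Theorem~\ref{thm:mixing-inheritance} yields $\Psi_{\widehat{F}}(k)\leq\Psi_\xi(k)$. This forces $\Psi_{\widehat{F}}(k)\to 0$, so $\big(\widehat{F}_t\big)$ is $\psi$--mixing, and moreover $\sum_{k=1}^\infty k\,\Psi_{\widehat{F}}(k)^{1/2}\leq\sum_{k=1}^\infty k\,\Psi_\xi(k)^{1/2}<\infty$ by hypothesis, which is exactly the $\psi$--mixing assumption required in Theorem~\ref{thm:LLN-for-psihat}.

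Since the substantive work is carried out in Theorem~\ref{thm:mixing-inheritance}, there is no deep obstacle in the corollary itself; the two points deserving a line of care are the monotonicity of the $\psi$--mixing coefficient under passing to images of measurable maps (immediate from its definition as a supremum), and the measurability of $\widehat{F}_t$ with respect to $X^{(t)}$ (routine). The one bookkeeping subtlety I would flag is strict stationarity of $\big(\widehat{F}_t\big)$: when $q_t$ varies with $t$ the reduction $g_t$ is time-dependent and stationarity could be lost, so I would either take $q_t$ constant, in which case $\widehat{F}_t=g\big(X^{(t)}\big)$ inherits stationarity from $\big(X^{(t)}\big)$, or simply remark that the coefficient comparison above requires no stationarity at all and hence delivers the assumption of Theorem~\ref{thm:LLN-for-psihat} in either case.
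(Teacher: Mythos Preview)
Your proposal is correct and follows essentially the same route as the paper: both arguments observe that $\widehat{F}_t = g_t\circ X^{(t)}$ for a measurable $g_t$, so that $\sigma\big(\widehat{F}_t\colon t\leq 0\big)\subseteq\sigma\big(X^{(t)}\colon t\leq 0\big)$ (and analogously for $t\geq k$), whence $\Psi_{\widehat{F}}\leq\Psi_X\leq\Psi_\xi$ by monotonicity of the supremum in \eqref{eq:psi-mixing-coef} combined with Theorem~\ref{thm:mixing-inheritance}. Your remark on stationarity when $q_t$ varies is a fair bookkeeping point that the paper does not raise, but it does not affect the argument.
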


\begin{proof}[Proof of Theorem~\ref{thm:mixing-inheritance}]
For $k\in \mathbb{N}$, let $T_1$ and $T_2$ be finite, nonempty subsets of $\{0, -1, -2,\dots\}$ and $\{k,k+1,k+2,\dots\}$ respectively, and set $T_0 \coloneqq T_1\cup T_2$. Let $\{\mathcal{C}_t$, $t\in T_0\}$ be a collection of measurable subsets of $\mathbb{R}^{\left[0,1\right)}$. By definition, $\sigma(X^{(t)}:\,t\leq 0)$ coincides with the $\sigma$-field generated by the class of sets of the form $\bigcap_{t\in T_1}[X^{(t)}\in\mathcal{C}_t]$ over all finite, nonempty $T_1\subset\{0,-1,-2,\dots\}$ and all collections $\{\mathcal{C}_t:\,t\in T_1\}$ of measurable subsets of $\mathbb{R}^{[0,1)}$, and similarly for $\sigma(X^{(t)}:\,t\geq k)$.

Notice that by equation~\eqref{eq:strong-cyclic-independence} and the Doob--Dynkin Lemma (see \cite[Lemma 1.13]{kallenberg2006foundations}) we have $\Prob\left[X^{\left(t\right)}\in \mathcal{C}_t\,\vert\,\xi\right] = g_t\circ \xi_t$, for some measurable function $g_t\colon M_1\left(\mathbb{R}\right)\rightarrow\mathbb{R}$. This fact, together with the cyclic--independence assumption, ensures that $$\Prob\left\{\bigcap_{t\in T_j}\left[X^{\left(t\right)}\in\mathcal{C}_t\right]\right\} = \E \left\{\Prob\left\{\bigcap_{t\in T_j}\left[X^{\left(t\right)}\in\mathcal{C}_t\right]\,\Big\vert\,\xi\right\}\right\} =
\E\left\{\prod_{t\in T_j}g_t\circ \xi_t\right\},$$ $j = 0, 1, 2$ (a similar computation yields strict stationarity of the process $(X^{(t)}:\,t\in\mathbb{Z})$). Thus, the quantity
\begin{align}\label{eq:mixing-coef-X}
	\left\vert 1 - \frac{\Prob\left\{\bigcap_{t\in T_0}\left[X^{\left(t\right)}\in \mathcal{C}_t\right]\right\}}{\Prob\left\{\bigcap_{t\in T_1}\left[X^{\left(t\right)}\in \mathcal{C}_t\right]\right\}\Prob\left\{\bigcap_{t\in T_2}\left[X^{\left(t\right)}\in \mathcal{C}_t\right]\right\}}\right\vert
\end{align}
is seen to be equal to
\begin{align}\label{eq:mixing-coef-xi}
	\left\vert 1 - \frac{\E\left\{\prod_{t\in T_0}g_t\circ \xi_t\right\}}{\E\left\{\prod_{t\in T_1}g_t\circ \xi_t\right\}\E\left\{\prod_{t\in T_2}g_t\circ \xi_t\right\}} \right\vert.
\end{align}
Substituting each $g_t$ in \eqref{eq:mixing-coef-xi} by an arbitrary measurable, bounded and positive $g_t'\colon M_1\left(\mathbb{R}\right)\rightarrow \mathbb{R}$, and taking the supremum over all collections $\left\{g_t':\,t\in T_0\right\}$ of such $g_t'$, and over all $T_0 = T_1\cup T_2$ as above, gives an upper bound to \eqref{eq:mixing-coef-X}. It is easily seen\footnote{By definition $\Psi_\xi\left(k\right)$ is obtained by taking the supremum over all collections of $g_t'$ which are indicator functions of measurable subsets of $M_1\left(\mathbb{R}\right)$.} that this supremum yields precisely $\Psi_\xi\left(k\right)$. This establishes that $\Psi_X\left(k\right)\leq\Psi_\xi\left(k\right)$ and completes the proof.
\end{proof}

\begin{proof}[Proof of Corollary~\ref{thm:mixing-inheritance-corol}]
	By definition (or using the Doob--Dynkin Lemma) we have that $\widehat{F}_t$ is of the form $\widehat{F}_t = g_t\circ X^{(t)}$ for some measurable $g_t\colon \mathbb{R}^{[0,1)}\to L^2(\mu)$. Since $\Prob(\widehat{F}_t \in B) = \Prob(X^{(t)}\in g_t^{-1}(B))$, it follows that the supremum in the LHS over all measurable subsets $B$ of $L^2(\mu)$ is bounded above by $\sup \Prob(X^{(t)}\in\mathcal{C})$, with $\mathcal{C}$ ranging over all measurable subsets of $\mathbb{R}^{[0,1)}$. An easy adaptation of this argument shows that the mixing coefficient sequence $\Psi_{\widehat{F}}$ is bounded above by $\Psi_{X}$.
\end{proof}

\section{Examples}
We refer the reader to Horta~and~Ziegelmann~\citeyearpar{horta2015conjugate} for an interesting application of the theory of conjugate processes to the problem of financial risk forecasting. Below we provide a simple example to illustrate the theory.

As discussed in Horta~and~Ziegelmann~\citeyearpar{horta2015conjugate}, the case where $(\xi_t)$ is an independent sequence is of no interest, since in this case $R^\mu$ is trivially the zero operator. Consider then an iid sequence $(\vartheta_t:\,t\in\mathbb{Z})$, where $\vartheta_t$ is uniformly distributed on $[0,1]$, and let $\eta_t$ be the random probability measure defined by (abusing a little on notation) $\eta_t(0) = \vartheta_t$ and $\eta_t(1) = 1 - \vartheta_t$. Setting $\xi_t\coloneqq (\eta_t + \eta_{t-1})/2$, we clearly obtain a $\psi$-mixing sequence which satisfies the summability condition of Theorem~\ref{thm:LLN-for-psihat}. Indeed, $(\xi_t)$ is $1$-dependent. A straightforward computation shows that $\Cov(F_0(x),F_1(y)) = 1/48$ for $x,y\in[0,1)$ and is identically zero otherwise, and therefore $R_\mu(x,y)$ is a positive constant for $x,y\in[0,1)$ which only depends on the chosen measure $\mu$.

Now, aside from the assumption that relation \eqref{eq:conjugation} holds, the nature of the process $(X_\tau:\,\tau\in\mathbb{R})$ is rather arbitrary. Below we simulate the case where, conditional on $\xi$, the process $(X_{t+\tau}:\,\tau\in[0,1))$ is a continuous time Markov chain on the state space $\{0,1\}$ with stationary distribution $(\xi_t(0), \xi_t(1))$. There is a free parameter in the construction, which is the mean holding time $1/q_0$ of state $0$. We set $q_0=10$. Thus, conditional on $\xi_t = \lambda_t$, the process $(X_{t+\tau}:\,\tau\in[0,1))$ is a Markov chain with initial distribution $(\lambda_t(0), \lambda_t(1))$ and generator
\begin{equation*}
	Q = \begin{pmatrix}
		-q_0 & q_0\\
		r_t & -r_t
	\end{pmatrix}
\end{equation*}
where $r_t \coloneqq q_0\lambda_t(0) / \lambda_t(1)$.

The conjugate process $(\xi, X)$ described above can be informally summarized as follows. At each day, the world finds itself in a (unobservable) \emph{state} which is characterized by a number lying in $[0,1]$. Within each day, given the state of the world, a system can find itself in two distinct (observable) \emph{regimes} (say, regime $\bar{0}$ and regime $\bar{1}$). This system switches between $\bar{0}$ and $\bar{1}$ according to a stationary, continuous time Markov chain, where the state of the world in that day represents the probability of the system being on regime $\bar{0}$ at any given point in time within that day. Figure~\ref{fig:example} displays a simulated sample path for the first 4 days of the process just described.

\begin{figure}[h]
	\centering
	\caption{A simulated sample path. Even days are colored in red; odd days in blue.}\label{fig:example}
	\includegraphics[scale = 1]{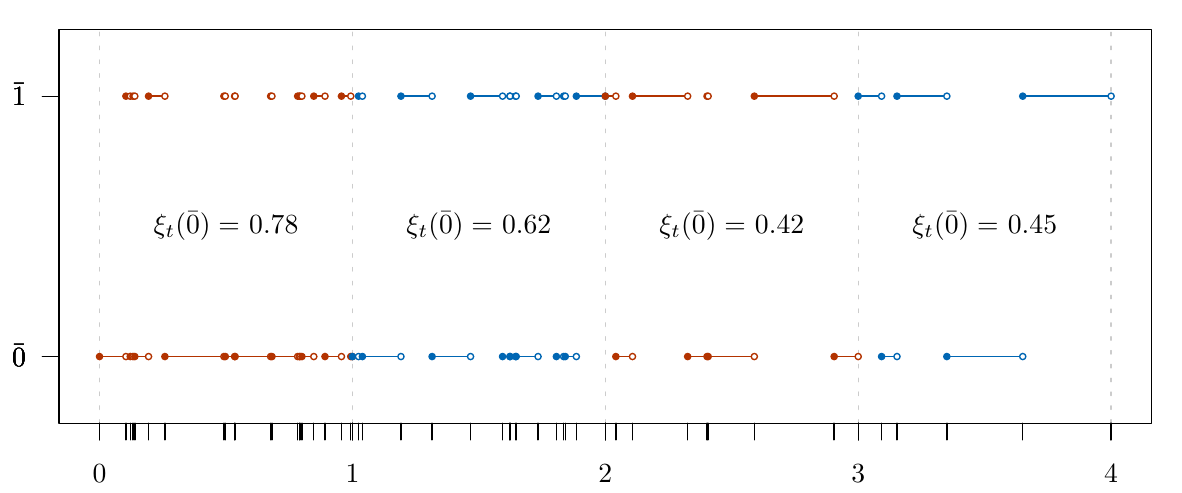}
\end{figure}

We also illustrate the consistency result via a Monte Carlo simulation study. For each $t = 1,\dots,n$, we sample the process $(X_{t+\tau}:\,\tau\in[0,1))$ once per cycle (that is, we take $q_t=1$ and $X_{1,t} = X_t$) and compute the corresponding value of $\widehat{C}_1(0,0)$. Figure~\ref{fig:boxplot} displays the boxplot of the estimated values of $\widehat{C}_1(0,0)$ across $10000$ replications of the above procedure, with the sample size varying in $\{100, 1000, 10000\}$. The blue line indicates the true parameter value $C_1(0,0) = 1/48$.
\begin{figure}[h]
	\centering
	\caption{Boxplots of $\widehat{C}_1(0,0)$ values across replications.}\label{fig:boxplot}
	\includegraphics[scale = 1]{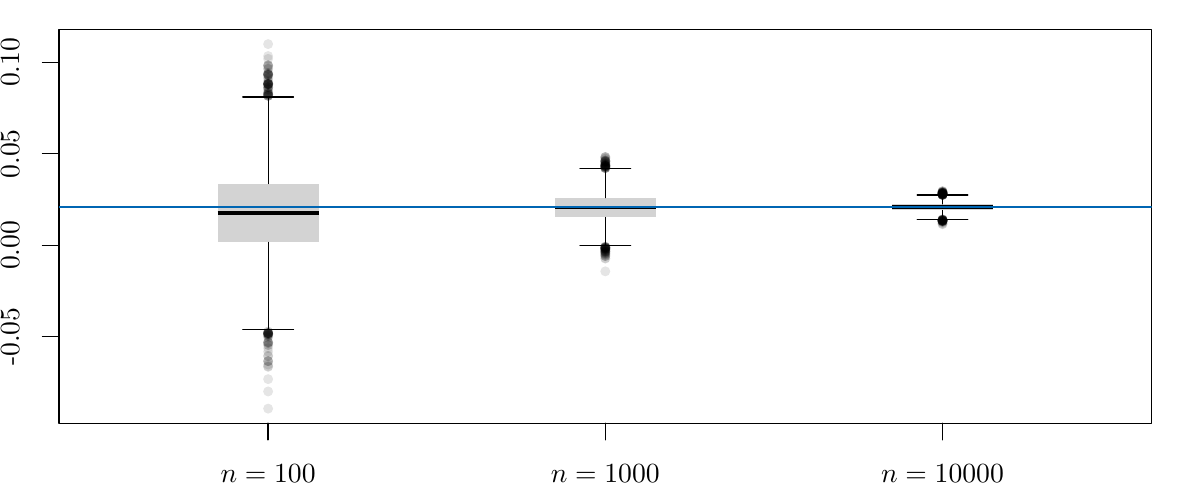}
\end{figure}


\begin{thebibliography}{}
\bibitem[\protect\citeauthoryear{Horta and Ziegelmann}{2018}]{horta2015conjugate}
Horta, E. and Ziegelmann, F. (2018) {Conjugate processes: Theory and application to risk forecasting}, \textit{Stochastic Processes and their Applications} 128~(3) 727--755.
\newblock \href {http://dx.doi.org/10.1016/j.spa.2017.06.002}
  {\path{doi:10.1016/j.spa.2017.06.002}}.

\bibitem[\protect\citeauthoryear{Horta and Ziegelmann}{2016}]{horta2016identifying}
Horta, E. and Ziegelmann, F. (2016) {Identifying the spectral representation of Hilbertian time series}, \textit{Statistics {\&} Probability Letters} 118 45--49.
\newblock \href {http://dx.doi.org/10.1016/j.spl.2016.06.014}
  {\path{doi:10.1016/j.spl.2016.06.014}}.

\bibitem[\protect\citeauthoryear{Bradley}{2005}]{bradley2005basic}
Bradley, R.~C. (2005) {Basic Properties of Strong Mixing Conditions. A Survey and Some Open Questions}, \textit{Probability Surveys} 2~(0)  107--144.
\newblock \href {http://dx.doi.org/10.1214/154957805100000104}
  {\path{doi:10.1214/154957805100000104}}.

\bibitem[\protect\citeauthoryear{Kallenberg}{1997}]{kallenberg2006foundations}
Kallenberg, O. (1997) \textit{Foundations of Modern Probability}, Probability and its Applications, Springer-Verlag, New York.
\newblock \href {http://dx.doi.org/10.1007/b98838} {\path{doi:10.1007/b98838}}.
\end{thebibliography}
\end{document}